\documentclass[11pt]{article}
\usepackage[mathvars]{brsheader} 
\abbreviateBBMathFont

\newcommand{\xto}[1]{\xrightarrow{#1}}
\newcommand{\MRto}{\xrightarrow[\text{MR}]{}}

\title{The regular pentagon is canonically Ramsey}
\author{Benedict \textsc{Randall Shaw}\footnotemark[1]}
\date{August 2026}

\begin{document}

\maketitle

\renewcommand{\thefootnote}{\fnsymbol{footnote}}
\footnotetext[1]{\href{mailto:bwr26@cam.ac.uk}{bwr26@cam.ac.uk}, Department of Pure Mathematics and Mathematical Statistics (DPMMS), University of Cambridge, Wilberforce Road, Cambridge, CB3 0WA, United Kingdom}

\begin{abstract}
A set of points \(C\subset \mathbb{R}^n\) is \textit{canonically Ramsey} if there is some larger set of points \(S\subset \mathbb{R}^{n'}\) such that any colouring of \(S\) contains either a monochromatic copy of \(C\) or a rainbow copy of \(C\). Mao, Ozeki, and Wang \cite{mao2022euclideangallairamseytheory} introduced this notion, showing that the 30-60-90 triangle is canonically Ramsey. Since then, many other configurations have been shown to be canonically Ramsey. The author \cite{randallshaw2026cuboidscanonicallyramsey} showed that cuboids are canonically Ramsey. Ge, Shu, Xu, and Yu \cite{ge2026simplicesexhibitcanonicalramsey} later showed that all simplices are canonically Ramsey, after which the author \cite{randallshaw2026productssimplicescanonicallyramsey} showed that all products of simplices are canonically Ramsey, a class which, together with its closure under taking subsets, includes all previously known canonically Ramsey sets. We prove that regular polygons with a prime number of sides are canonically Ramsey---the first known sets outside this class.
\end{abstract}

\section{Introduction}

We consider \textit{configurations}, finite sets of points in some space \(\mathbb{R}^n\). We are not concerned with the dimension of the ambient space, and so for \(n<n'\), we identify \(\mathbb{R}^n\) with a subset of \(\mathbb{R}^{n'}\) in the usual way. For two configurations \(C,C'\), we say some \(X\subset C'\) is a \textit{copy} of \(C\) if there is a surjective isometry mapping \(C\) to \(X\), or a \textit{scaled copy} of \(C\) if there is a surjective isometry mapping some dilation of \(C\) to \(X\). We study \textit{colourings} of configurations, maps from the set of points in a configuration to some other set on which we assume no further structure---equivalently, we may think of these as partitions of the point set, or in turn as equivalence relations on the point set.

Given \(S\subset \mathbb{R}^n\) for some \(n\), a configuration \(C\), and some positive integer \(r\), we write
\[S\xto{r} C\]
if any colouring of \(S\) with at most \(r\) colours contains a \textit{monochromatic} copy of \(C\)---that is, a copy of \(C\) in \(S\) all of whose points receive the same colour. Equivalently, in the language of partitions, a monochromatic copy is a copy contained in a single part of the partition---or, in the language of equivalence relations, a copy all of whose points are equivalent. A configuration \(C\) is called \textit{Ramsey} if, for any \(r\), there is some configuration \(S\) such that \(S\xto{r}C\). By a compactness argument, this is equivalent to asking whether there is some \(n\) such that \(\mathbb{R}^n\xto{r} C\). The question of which sets are Ramsey is central to Euclidean Ramsey theory, and was first studied by Erdős, Graham, Montgomery, Rothschild, Spencer, and Straus \cite{erdos1973}, who showed that all Ramsey sets are spherical, and that any product of Ramsey sets is itself Ramsey. Frankl and Rödl \cite{frankl1990} later proved that (nondegenerate) simplices are Ramsey.

Kříž \cite{kriz1991} then proved that \textit{soluble} sets---sets acted on transitively by some soluble group of symmetries---are Ramsey, and thus that all \textit{subsoluble} sets---sets contained in some soluble set---are also Ramsey. This class of sets includes, among many others, every regular polygon. Later, Karamanlis \cite{karamanlis2022} showed that this class also includes all simplices, and indeed Behague \cite{behague2025nearlyknowneuclideanramsey} has shown that nearly all known Ramsey sets are subsoluble. The question of exactly which sets are Ramsey remains open.

In canonical Euclidean Ramsey theory, we remove the bound on the number of colours we use. Of course, we can no longer guarantee a monochromatic copy of any non-trivial configuration, since it would be possible just to colour every point a different colour. We are thus motivated towards the following definition: given \(S\subset \mathbb{R}^n\) for some \(n\) and a configuration \(C\), we write
\[S\MRto C\]
if any colouring of \(S\) contains either a monochromatic copy of \(C\) or a \textit{rainbow} copy of \(C\)---that is, a copy of \(C\) all of whose points receive different colours. Equivalently, in the language of partitions, a rainbow copy is a copy all of whose points lie in different parts of the partition---or, in the language of equivalence relations, a copy none of whose points are equivalent to any other. A configuration \(C\) is called \textit{canonically Ramsey} if there is some configuration \(S\) such that \(S\MRto C\). By a compactness argument, this is again equivalent to asking whether there is some \(n\) such that \(\mathbb{R}^n\MRto C\). Note that both the class of Ramsey sets and the class of canonically Ramsey sets are closed under taking subsets.

The study of canonical Euclidean Ramsey theory is relatively new, but has been quite active. The question of which sets are canonically Ramsey was first studied by Mao, Ozeki, and Wang \cite{mao2022euclideangallairamseytheory}, who showed that the 30-60-90 triangle is canonically Ramsey. Cheng and Xu \cite{cheng2023euclideangallairamseyvariousconfigurations} then proved that squares, right-angled triangles, and a wide family of other triangles and simplices are canonically Ramsey. Gehér, Sagdeev, and Tóth \cite{geher2026} subsequently proved that all hypercubes are canonically Ramsey. Fang, Ge, Shu, Xu, Xu, and Yang \cite{fang2025canonicalramseytrianglesrectangles} then showed that all triangles, all rectangles, and a wider class of tetrahedra than previously known are canonically Ramsey. The author \cite{randallshaw2026cuboidscanonicallyramsey} then proved that all cuboids are canonically Ramsey, implying that a wider class of simplices are canonically Ramsey. Ge, Shu, Xu, and Yu \cite{ge2026simplicesexhibitcanonicalramsey} then showed that all simplices are canonically Ramsey, after which the author \cite{randallshaw2026productssimplicescanonicallyramsey} showed that all products of simplices are canonically Ramsey---a class which included all known canonically Ramsey sets.

Fang, Ge, Shu, Xu, Xu, and Yang \cite{fang2025canonicalramseytrianglesrectangles} have conjectured that all Ramsey sets should be canonically Ramsey, but both this conjecture and its converse remain open. By comparison with the progress of classical Euclidean Ramsey theory, one would hope to work towards an analogue of Kříž's theorem that subsoluble sets are Ramsey. The smallest natural next set to study is therefore the regular pentagon. We show that this is indeed canonically Ramsey. Rather surprisingly, however, this proof does not generalise to all regular polygons, but only those with a prime number of sides. Thus our main result is the following:

\begin{theorem}\label{thm:main}
    For any prime \(p\), the regular \(p\)-gon is canonically Ramsey.
\end{theorem}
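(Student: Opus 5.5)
We will use the standard canonical-Ramsey reduction: with \(P\) the regular \(p\)-gon, every colouring of a large enough host yields either a monochromatic \(P\) or a rainbow \(P\). The host should carry a large cyclic symmetry. Take \(P\) to be the vertices \(\omega^j\), \(j\in\mathbb{Z}_p\), \(\omega=e^{2\pi i/p}\), in \(\mathbb{C}\cong\mathbb{R}^2\). For a large \(N\), let \(S\subset \mathbb{C}^N\cong\mathbb{R}^{2N}\) be the set of points \((\omega^{a_1},\dots,\omega^{a_N})\) with \(a\in\mathbb{Z}_p^N\), scaled as needed. For a \emph{combinatorial line} in \(\mathbb{Z}_p^N\) (a root \(a\) and a nonempty active set \(A\subseteq[N]\), points \(a+t\mathbf{1}_A\) for \(t\in\mathbb{Z}_p\)), the points \(x_t\) satisfy \(|x_t-x_s|^2=|A|\,|\omega^t-\omega^s|^2\). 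So every line is a scaled copy of \(P\). More generally, \(a+t v\) with \(v\in\{0,1,\dots,p-1\}^N\) gives the points \(\sum_i |\omega^{tv_i}-\omega^{sv_i}|^2\), which is again a scaled regular \(p\)-gon, possibly in a permuted order. Here primality first appears: \(t\mapsto tv_i\) is a bijection of \(\mathbb{Z}_p\) for every nonzero \(v_i\). Hence any ``multiplicative line'' \(\{a+tv\}\) is a (scaled) copy of \(P\), because the distance between \(a+tv\) and \(a+sv\) depends only on \(t-s\) and is symmetric under \(t-s\mapsto s-t\). Restricting to a single sphere-scale (fixing the multiset of \(v_i\) classes) is harmless, since all copies in \(S\) of the relevant size are then genuine copies.

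The core of the argument is a canonical Hales--Jewett-type statement for \(\mathbb{Z}_p^N\), in the spirit of the Prömel--Voigt/Erdős--Rado canonical theorems. Given an arbitrary colouring \(\chi\) of \(\mathbb{Z}_p^N\), we find a ``\(p\)-ary subspace'' of large dimension on which \(\chi\) is canonical. On such a subspace, after passing to a subspace, \(\chi(x)\) is determined injectively by some linear-type functional of the coordinates. We only need the one-dimensional conclusion. We aim to show that there is a line \(\{a+tv\}\) which is either monochromatic or rainbow. We argue by contradiction: suppose no such line exists. Then every line has a repeated colour, i.e.\ \(\chi(a+tv)=\chi(a+sv)\) for some \(t\neq s\). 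By a Ramsey/Hales--Jewett colouring of pairs (colour each line by the pattern of equalities among its \(p\) colours, a finite set of patterns), we pass to a high-dimensional subspace where all lines share the same equality pattern \(\pi\), a partition of \(\mathbb{Z}_p\) that is neither discrete nor trivial.

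The main obstacle, and the step where primality is essential, is showing that such a constant non-trivial pattern is impossible. We will exploit that \(\mathbb{Z}_p^\times\) acts transitively on nonzero differences. Reparametrising a line \(t\mapsto a+tv\) by \(t\mapsto a+(ct+d)v\), which is again a line of the same family since multiplying \(v\) by \(c\) and reducing mod \(p\) preserves it in \(S\), shows that \(\pi\) must be invariant under the affine group \(\mathrm{AGL}_1(\mathbb{Z}_p)\). That group is 2-transitive on \(\mathbb{Z}_p\), so a non-discrete invariant partition must contain some pair, hence all pairs, hence is trivial. That is a contradiction. For composite \(n\) the invariant partitions into cosets of subgroups survive, which explains the restriction. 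The delicate point is making the reparametrisation compatible with the subspace reduction: we need the sub-configuration to be closed under the maps \(v\mapsto cv\) and \(a\mapsto a+dv\). To ensure this, we will apply the pattern-homogenisation not to lines alone but to \emph{two-dimensional} subspaces \(\{a+tv+sw\}\). There we compare the lines in directions \(v\), \(w\) and \(v+cw\), using Graham--Rothschild for parameter sets over \(\mathbb{Z}_p\). The consistency of the equality relation across these intersecting lines then forces \(\pi\) to be \(\mathrm{AGL}_1\)-invariant. Finally, a compactness argument converts the finite \(S\) into \(\mathbb{R}^{2N}\MRto P\).
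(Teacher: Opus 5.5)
\textbf{There is a genuine gap at your central step:} the claim that the constant pattern $\pi$ must be $\mathrm{AGL}_1(\mathbb{Z}_p)$-invariant.

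\emph{Why reparametrisation does not work.} Graham--Rothschild homogenises the pattern only over combinatorial lines with their canonical labelling: the common value $t$ on the active block, fixed letters elsewhere. Reparametrising by $t\mapsto ct+d$ keeps the point set but changes the labelling. The relabelled line (direction $c\mathbf{1}_A$, offset $d$ on $A$) is not one of the lines your homogenisation controls. So nothing ties its pattern, $\pi$ composed with the affine map, back to $\pi$.

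\emph{The intermediate claim is false.} Colour $x\in\mathbb{Z}_p^N$ by $(f(x_1),\dots,f(x_N))$, where $f$ merges the values $1$ and $2$ and is otherwise injective. Every combinatorial line of the whole space then has the pattern $\{\{1,2\},\{3\},\dots,\{p\}\}$. This pattern is constant, neither discrete nor trivial, and not affine-invariant. The colouring obeys the theorem only because lines whose active coordinates carry unequal offsets are rainbow; for example, $t\mapsto(t,t+1,1,\dots,1)$. Any contradiction must therefore come from lines that your homogenisation does not see.

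\emph{The two-dimensional repair is only asserted, and it rests on a false geometric claim.} The set $\{a+tv\}$ is a scaled regular $p$-gon only when all nonzero $v_i$ lie in a single class $\{c,-c\}$. Circulant, symmetric distances do not force a polygon, and fixing the multiset of classes does not help. For instance, with $p=5$ and $v=(1,2)$, every pair of points is at squared distance $|1-\omega|^2+|1-\omega^2|^2=5$, so the five points form a regular simplex. Consequently:
\begin{itemize}
\item Lines in direction $v+cw$ with $c\neq\pm1$ are not copies of $P$, so your no-monochromatic/no-rainbow hypothesis says nothing about them.
\item For $c=\pm1$, the shifted diagonals are copies of $P$ but not combinatorial lines, so their patterns need not be $\pi$.
\end{itemize}
No mechanism is given by which consistency across these lines forces affine invariance.

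\emph{What is missing.} You need control of the induced relations on all subcubes of dimension up to about $p+1$, independent of the fixed letters and their positions. This is the paper's $\sim$-invariance, obtained via Lemmas~\ref{lem:canswap}, \ref{lem:canchange} and \ref{lem:invariant}. For Theorem~\ref{thm:main} alone, applying Graham--Rothschild to the induced equivalence relations on all combinatorial subspaces of dimension at most $p+1$, rather than to line patterns, would also give this. In place of affine invariance, the paper then uses a commutation argument:
\begin{itemize}
\item Two equivalent points that differ in a coordinate with values $a,b$ give $ab\sim ba$ (Lemma~\ref{lem:eqcomm}).
\item This relation is transitive (Lemma~\ref{lem:transcomm}).
\item The offset line $t\mapsto(t,t+1,\dots,t+p-1,1)$ is either rainbow, or it yields $ab\sim ba$ whenever $b-a=d$ for some fixed $d\neq0$. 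Since $p$ is prime, the chain $a,a+d,a+2d,\dots$ then gives $ab\sim ba$ for all $a,b$. So coordinate permutations preserve colour, and that line is monochromatic.
\end{itemize}
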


In fact, we prove that all powers of a regular \(p\)-gon are canonically Ramsey. It is not yet known in general whether products of canonically Ramsey sets are themselves Ramsey---if so, this would imply that all canonically Ramsey sets are contained in arbitrarily large canonically Ramsey sets, and thus are themselves Ramsey. This would have consequences of significant interest, as this would mean that \(\{0,1,2\}\), for example, is not canonically Ramsey---but one can show that any colouring witnessing this cannot be spherical, and cannot even be preserved by permuting coordinates. Hence this is genuinely a stronger result than just showing a regular polygon with a prime number of sides is canonically Ramsey:

\begin{theorem}\label{thm:mainplus}
    For any prime \(p\) and positive integer \(k\), where \(C\) is a regular \(p\)-gon, there is some \(n\) such that
    \[p^{-p/2} C^n \MRto C^k.\]
\end{theorem}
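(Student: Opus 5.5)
The plan is to reduce Theorem~\ref{thm:mainplus} to a canonical Hales--Jewett-type statement on \((\mathbb{Z}/p)^n\), in which primality of \(p\) is used only at the very end.

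\textbf{Coordinates and line copies.} Identify the vertices of \(C\) with the \(p\)-th roots of unity \(\omega^a\), \(a\in\mathbb{Z}/p\). A point of \(C^n\) is then a word \(a\in(\mathbb{Z}/p)^n\), namely \((\omega^{a_1},\dots,\omega^{a_n})\). Consider a generalised line
\[ j\mapsto a+j c, \qquad c\in(\mathbb{Z}/p)^n .\]
The squared distance between its points \(j\) and \(j+d\) is
\[ \sum_i |\omega^{c_id}-1|^2=\sum_{r\neq 0} m_r\,(2-2\cos(2\pi rd/p)), \]
where \(m_r\) is the number of coordinates with \(c_i=r\).

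The first step is to choose a multiplicity profile \((m_r)\) that makes this expression an affine function of \(|\omega^d-1|^2\). The natural choice is \(m_r=t\) for all \(r\neq 0\), with an extra \(s\) added at \(r=\pm1\). The full sum over \(r\) is constant in \(d\), because a complete sum of nontrivial characters vanishes. So this profile gives squared distances \(\lambda|\omega^d-1|^2+\mu\) for \(d\neq 0\).

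To kill the constant \(\mu\) we need the ``\(\pm1\)'' part and the ``all residues'' part to interact correctly. I would do this by taking differences of two such lines, or by using the rescaling by \(p^{-p/2}\) from the statement; the factor \(p^{-p/2}\) presumably comes out of this normalisation. The output of this step is an explicit family of copies of \(C\), and more generally of \(C^k\), inside \(p^{-p/2}C^n\). These copies are indexed by \(k\)-dimensional ``generalised subspaces'' of \((\mathbb{Z}/p)^n\) whose wildcard blocks carry a prescribed pattern of multipliers.

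\textbf{Canonical structure theorem.} Next I would prove, or import, a canonical structure theorem for colourings of \((\mathbb{Z}/p)^n\) relative to these patterned subspaces. This is in the spirit of the canonical Hales--Jewett theorem of Prömel and Voigt, combined with Graham--Rothschild iteration. The statement I want is:
\begin{quote}
for \(n\) large, every colouring of \((\mathbb{Z}/p)^n\) admits a patterned \(k\)-dimensional subspace \(V\cong(\mathbb{Z}/p)^k\) on which the induced equivalence relation is \emph{canonical}. That is, two points of \(V\) are equivalent if and only if they agree under a fixed linear-type projection.
\end{quote}
Such a canonical relation is in particular invariant under the translations of \(V\). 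I expect this step to be the main obstacle. The standard canonical Hales--Jewett theorem handles ordinary combinatorial lines, whereas here the wildcard must appear with several different multipliers inside one block. I would try to adapt the usual proof: first apply the Graham--Rothschild parameter-set theorem to make the colouring of pairs depend only on the ``type'' of the pair, then run an Erdős--Rado style analysis of those types.

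\textbf{Finishing via primality.} On each coordinate line of \(V\), the induced equivalence relation on \(\mathbb{Z}/p\) is translation invariant. Hence it is the coset relation of a subgroup of \(\mathbb{Z}/p\). Since \(p\) is prime, that subgroup is either \(\{0\}\) or all of \(\mathbb{Z}/p\), so each line is either rainbow or monochromatic. This is exactly where primality enters, and it explains why composite polygons fail: there the equivalence could be the coset relation of a proper nontrivial subgroup.

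Combining the coordinate directions in the canonical form shows that the whole copy of \(C^k\) is monochromatic or rainbow, provided the canonical projection is trivial or injective. In the mixed case, where some directions are collapsed and others are not, I would pass to a smaller subspace, for instance a diagonal copy using only the injective directions, or only the collapsed ones. Doing this with \(k\) replaced by a larger \(k'\) from the start guarantees that a pure copy of \(C^k\) survives.
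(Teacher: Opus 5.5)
There is a genuine gap. The canonical structure theorem, which you yourself flag as the main obstacle, carries essentially the whole proof, and for the subspaces you describe it is false, so your primality step never gets to act. Colour \(y\in(\mathbb{Z}/p)^n\) by the number of coordinates with \(y_i=0\). Take any patterned subspace \(x\mapsto a+\sum_l x_lc^{(l)}\) whose wildcard blocks \(B_l\) carry nonzero multipliers, with \(a\) vanishing on the blocks. Since \(p\) is prime, for \(i\in B_l\) the entry \(x_lc^{(l)}_i\) is \(0\) iff \(x_l=0\). So the colour is a constant plus \(\sum_{l:\,x_l=0}|B_l|\), and every coordinate line splits into the classes \(\{0\}\) and \(\{1,\dots,p-1\}\). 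That is not a coset partition, and it is neither rainbow nor monochromatic. Hence any canonical theorem for this class of subspaces, however proved (for instance by Graham--Rothschild plus an Erd\H{o}s--Rado analysis of types), must allow forms induced by non-translation-invariant equivalence relations on the alphabet. Your sketch has no mechanism that forces translation invariance, and that is where all the difficulty lies. If offsets are allowed, a statement of your kind is plausible (compare the concluding remarks), but only via the argument below.

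The paper gets invariance from offsets rather than multipliers, and it never proves a full canonical theorem. The emulation \(\sigma\) replaces \(a\) by the block \((a,a+1,\dots,a+p-1)\), so all points of an emulated copy are coordinate permutations of one another; the colouring above, for instance, is constant on it. Lemmas~\ref{lem:canswap}--\ref{lem:invariant} alternate Ramsey's theorem on \((n-1)\)-sets of single coordinates with emulation, over \(p-1\) rounds. This produces a copy on which \(\sim_w\) depends only on \(\dim w\). In that copy, one coincidence inside the final emulated copy of \(C^k\) gives \(y(y+d)\sim(y+d)y\) for all \(y\) (Lemma~\ref{lem:eqcomm}). This commutation relation is transitive (Lemma~\ref{lem:transcomm}), and \(d\) generates \(\mathbb{Z}/p\) because \(p\) is prime, so \(xy\sim yx\) for all \(x,y\). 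Colours are then invariant under permuting coordinates, and the emulated copy is monochromatic. Your primality step mirrors only this last move.

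There are also two smaller errors. First, in step 1 the all-residues part of your profile adds \(2tp\) to every nonzero squared distance (a simplex component). Neither rescaling nor ``differences of lines'' removes it; just take \(t=0\). Second, \(p^{-p/2}\) is not a normalisation constant: it records \(p\) emulation steps, each costing exactly \(\sqrt p\). Reaching that exact scale needs exact control of block sizes, which Graham--Rothschild does not give. Without it you do not even get a scaled \(C^k\), only \(\sqrt{|B_1|}\,C\times\dots\times\sqrt{|B_k|}\,C\).
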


Note that for \(n\leq 4\), regular \(n\)-gons and powers thereof are already known to be canonically Ramsey \cite{geher2026,randallshaw2026productssimplicescanonicallyramsey}, and so we assume \(p\geq 5\).

\section{Notation and outline}

Let \(C=\{x_1,\dots,x_p\}\) be the points of a regular \(p\)-gon with unit side length, where the points are numbered in order around the polygon. To make the notation easier, we will often identify \(C\) with \([p]\), so that \(x_i\) is identified with \(i\). Thus we will similarly identify \(C^n\) with \([p]^n\), and write \((a_1,\dots,a_n)\) to mean \(\left(x_{a_1},\dots,x_{a_n}\right)\). We will often in fact be working with scaled copies of \(C^n\)---where there is no confusion, we may omit this scaling from our notation. One particular scaled copy we will use at several points is the embedding \(\sigma_n\) of \(\sqrt{p} C^n\) into \(C^{pn}\) that maps \((a_1,\dots,a_n)\) to
\[\left(a_1,a_1+1,\dots,a_1+(p-1),a_2,\dots,a_n,a_n+1,\dots,a_n+(p-1)\right),\]
where entries are considered modulo \(p\). We call this the \textit{standard emulated copy of} \(\sqrt{p}C^n\), and omit the subscript on \(\sigma\) where there is no confusion. For \(n'>pn\), we consider instead the map \(\sigma_{n,n'}\) that maps \((a_1,\dots,a_n)\) to
\[\left(a_1,a_1+1,\dots,a_1+(p-1),a_2,\dots,a_n,a_n+1,\dots,a_n+(p-1),1,\dots,1\right),\]
and call this the \textit{standard emulated copy of} \(\sqrt{p}C^n\) \textit{within} \(C^{n'}\).

For our purposes, we think of a colouring of \(C^n\) not as a function from \(C^n\) to any particular set, but instead as an equivalence relation \(\sim\) on \(C^n\), where two points are related if they have the same colour. Recalling that we have identified \(C^n\) with \([p]^n\), we will want to consider `local' fragments of this equivalence relation in the following way: for any word \(w\in\{1,2,\dots,p,*\}^n\) containing \(m\) instances of the special character \(*\), we consider a corresponding injection \(\iota_w: [p]^m\to [p]^n\) that maps \((a_1,\dots,a_m)\) to the word obtained by replacing the \(m\) instances of \(*\) in \(w\) by \(a_1,\dots,a_m\) in that order. This induces an equivalence relation on \([p]^m\), the pullback of \(\sim\) with respect to \(\iota_w\)---we call this \(\sim_w\). So \(\sim_w\) is the equivalence relation on \([p]^m\) with \(a\sim_w b\) if \(\iota_w(a)\sim \iota_w(b)\). As a shorthand, we say this word \(w\) and equivalence relation \(\sim_w\) each have \textit{dimension} \(m\).

Note that we may also extend \(\iota_w\) to a map from \(\{1,2,\dots,p,*\}^m\) to \(\{1,2,\dots,p,*\}^n\), by substituting entries similarly---we may use this extension, but it will not affect the definition of \(\sim_w\), as the equivalence relation \(\sim\) is not defined on words containing \(*\). Notice that in this convention, where the length of \(w'\) is the dimension of \(w\), we may compose \(\iota_w\) and \(\iota_{w'}\), and this composition \(\iota_w\circ\iota_{w'}\) is exactly the inclusion \(\iota_{w''}\) given by \(w''=\iota_w(w')\). In particular, we then have \(\sim_{w''}=(\sim_w)_{w'}\).

Note that we may similarly extend \(\sigma_n\) to a map from \(\{1,\dots,p,*\}^n\) to \(\{1,\dots,p,*\}^{pn}\) that takes words of dimension \(m\) to words of dimension \(pm\) as follows: when \(w_i\in [p]\), we choose the entries at positions \((i-1)p+1,\dots,ip\) of \(\sigma(w)\) to be \(w_i,w_i+1,\dots,w_i+(p-1)\) as usual. Otherwise, if \(w_i=*\), then we simply set all these entries of \(\sigma(w)\) to be \(*\). So if \(\sim'\) is the pullback of \(\sim\) on \([p]^n\) with respect to \(\sigma\), then certainly \(\sim'_w\) depends only on \(\sim_{\sigma(w)}\), as the image of \(\sigma\circ \iota_w\) is entirely contained in the image of \(\iota_{\sigma(w)}\).

The proof divides into two main steps. First, we show that for sufficiently large \(n'\), given any equivalence relation \(\sim\) on \(C^{n'}\), we may select a scaled copy of \(C^n\) in \(C^{n'}\) within which each induced \(\sim_w\) depends only on the dimension of \(w\)---so for each dimension \(m\), there is some \(\sim_m\) such that \(\sim_w=\sim_m\) for each word \(w\in\{1,\dots,p,*\}^n\) of dimension \(m\). We say that such a \(C^n\) is \(\sim\)-\textit{invariant}. We prove this by an induction analogous to that of Kříž \cite{kriz1991}.

We then work within this copy of \(C^n\). Since \(\sim_w\) depends only on its dimension, we suppress its subscript where the meaning is clear. We show that whenever two points are equivalent, and differ in some coordinate, with values \(a\) and \(b\) at that coordinate, then we have \(ab\sim ba\). We show that this identity is transitive: if \(ab\sim ba\) and \(bc\sim cb\), then \(ac\sim ca\). But now consider the standard emulated copy of \(\sqrt{p}C^k\), the set of all points of the form
\[\left(a_1,a_1+1,\dots,a_1+(p-1),a_2,\dots,a_k,a_k+1,\dots,a_k+(p-1),1,\dots,1\right),\]
where the entries are considered modulo \(p\). Suppose that this copy is not rainbow, so that two of these points are equivalent. But then there are some \(a\neq b\) such that for each \(i\),
\[(a+i)(b+i)\sim (b+i)(a+i).\]
By transitivity, this is enough to show that \(ab\sim ba\) for all choices of \(a\) and \(b\). But then any permutation of a point's coordinates does not change its colour---so in particular, the scaled copy of \(C^k\) described above is monochromatic. This gives the desired result.

\section{Finding a \(\sim\)-invariant \(C^n\)}

We first define two slightly weaker notions of invariance. First, for any \(S\subset [p]\), we say that an equivalence relation \(\sim\) on \(C^n\) is \(S\)-\textit{interchangeable} if, for any \(w\in\{1,\dots,p,*\}^n\), whenever an entry of \(w\) that is in \(S\) is replaced by another member of \(S\), the induced relation \(\sim_w\) is unchanged. So for example, when \(S\) is a singleton, every equivalence relation is \(S\)-interchangeable.

Secondly, for any \(S\subset [p]\), we say that an equivalence relation \(\sim\) on \(C^n\) is \(S\)-\textit{swappable} if, for any \(w\in\{1,\dots,p,*\}^n\), whenever two adjacent entries of \(w\) are swapped, and at least one of those entries is in \(S\), the induced relation \(\sim_w\) is unchanged. So for example, if an equivalence relation is \([p]\)-interchangeable, then any adjacent transposition leaves \(\sim_w\) unchanged, as swapping \(*\) and \(*\) has no effect, and so by extension any permutation does not change \(\sim_w\). But this would not necessarily be true for \([p-1]\)-interchangeable relations, as it would not cover swapping \(p\) and \(*\). 

Observe that if an equivalence relation \(\sim\) is \(S\)-interchangeable, and for some \(x\in S\), it is \(\{x\}\)-swappable, then certainly it is also \(S\)-swappable, as any swap including a member of \(S\) can be realised by changing that member to an \(x\), making the swap, and then changing it back. Also note that an equivalence relation that is both \([p]\)-interchangeable and \([p]\)-swappable must in fact be \(\sim\)-invariant, as any word \(w\) can be made into any other word of the same dimension by permuting the entries so the \(*\)s are in the right coordinates, and then changing the other entries.

We first prove the steps we will require for our induction as two separate lemmas:

\begin{lemma}\label{lem:canswap}
    For any \(n\), and any \(r \in [p]\), there exists \(n'\) such that, for any \([r]\)-interchangeable equivalence relation \(\sim\) on \(C^{n'}\), there is some copy of \(C^n\) on which \(\sim\) is both \([r]\)-interchangeable and \([r]\)-swappable.
\end{lemma}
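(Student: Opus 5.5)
The plan is to take the copy of \(C^n\) to be a coordinate subcube: choose a set \(T\subset[n']\) of \(n\) coordinates and fix every other coordinate to the value \(1\). This is a genuine (unscaled) copy of \(C^n\). The induced relation on it is the pullback \(\sim_W\), where \(W\) has \(*\) on \(T\) and \(1\) elsewhere. By the composition rule \(\sim_{\iota_W(w')}=(\sim_W)_{w'}\), every induced relation \((\sim_W)_{w'}\) is again an induced relation of \(\sim\).

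\textbf{Interchangeability is inherited.} Replacing an entry of \(w'\) lying in \([r]\) by another element of \([r]\) replaces an entry of \(\iota_W(w')\) in the same way. So the relation on the subcube is \([r]\)-interchangeable, whatever \(T\) we pick. By the observation preceding the lemma (with \(x=1\in[r]\)), it then suffices to choose \(T\) so that the restricted relation is also \(\{1\}\)-swappable.

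\textbf{Choosing \(T\) by Ramsey's theorem.} For a word \(w\in\{1,\dots,p,*\}^n\) on the subcube, let \(T'\subset T\) be the coordinates where \(w\neq 1\), and let \(u\in\{2,\dots,p,*\}^{|T'|}\) be the subword of \(w\) read off on \(T'\). The word \(\iota_W(w)\) in \(\{1,\dots,p,*\}^{n'}\) is exactly the word with \(u\) placed on \(T'\) (in increasing order) and \(1\) everywhere else. Hence \(\sim_w\) depends only on the pair \((T',u)\).

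For each \(m\le n\), colour each \(m\)-subset \(U\subset[n']\) by the function assigning to each \(u\in\{2,\dots,p,*\}^m\) the relation induced by placing \(u\) on \(U\) and \(1\) elsewhere. This is a finite colouring: for each \(m\) there are finitely many \(u\), and finitely many equivalence relations on \([p]^{\#*(u)}\). By Ramsey's theorem, applied successively for \(m=1,\dots,n\), if \(n'\) is large enough there is a set \(M\subset[n']\) with \(|M|\ge n\) such that, for each \(m\le n\), all \(m\)-subsets of \(M\) receive the same colour. Take \(T\) to be any \(n\)-subset of \(M\).

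\textbf{Checking \(\{1\}\)-swappability.} Suppose we swap adjacent entries \(w_i=1\) and \(w_{i+1}=c\). If \(c=1\), nothing changes. Otherwise the subword \(u\) is unchanged, and \(T'\) changes by replacing one element of \(T\) with its neighbour in \(T\). Both versions of \(T'\) are \(|T'|\)-subsets of \(M\), so by homogeneity they induce the same relation. Hence \(\sim_w\) is unchanged. Combined with \([r]\)-interchangeability, the observation gives \([r]\)-swappability.

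\textbf{Expected difficulties.} The main point to get right is the bookkeeping in the second paragraph. One must check that the fixed entries being \(1\) is what makes "swapping a \(1\)" equivalent to "shifting a non-\(1\) coordinate within \(M\)". One must also check that the order of the entries of \(u\) is preserved by the swap, so that the same colour class applies. Beyond that, the argument is a standard multi-size application of Ramsey's theorem, giving \(n'\) as an iterated Ramsey number. The hypothesis \(r\in[p]\) with \(1\in[r]\) is used only through the earlier observation.
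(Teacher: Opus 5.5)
Your proof is correct and follows essentially the same route as the paper: take a coordinate subcube with all other coordinates fixed to \(1\), apply Ramsey's theorem to sets of coordinates, check \(\{1\}\)-swappability, and then invoke the observation with \(x=1\). The only difference is bookkeeping. The paper colours just the \((n-1)\)-subsets by the full induced relation on \([p]^{n-1}\), which already encodes every lower-dimensional pattern, including further \(1\)s. It handles a swap by deleting the swapped \(1\) and comparing \(\sim_{w^S}\) with \(\sim_{w^{S'}}\), so it needs a single application of Ramsey's theorem rather than your iterated, all-uniformities one.
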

\begin{proof}
    Let \(t\) be the number of possible equivalence relations on \([p]^{n-1}\), and note that \(t\) is finite. By Ramsey's theorem \cite{ramsey1930}, we may choose \(n'\) such that any \(t\)-colouring of the subsets of \([n']\) of size \(n-1\) contains a monochromatic set of size \(n\)---that is to say, some set of size \(n\) all of whose subsets of size \(n-1\) receive the same colour.

    For each \(S\subset [n']\), we consider the word \(w^S\) which has \(w^S_i=*\) for each \(i\in S\), and \(w^S_i=1\) otherwise. We colour the subsets \(S\) of size \(n-1\) by the equivalence relation \(\sim_{w^S}\). Now this is a \(t\)-colouring, so certainly there is some \(X\subset [n']\) of size \(n\) such that all subsets of \(X\) of size \(n-1\) receive the same colour. Now the image \(\iota_{w^X}\) gives our desired copy of \(C^n\). Clearly the restriction of \(\sim\) to this copy of \(C^n\) is still \([r]\)-interchangeable---it just remains to show it is \([r]\)-swappable.

    Indeed, by our earlier observation, it suffices to show that \(\sim\) is \(\{1\}\)-swappable on this copy of \(C^n\). Suppose that \(w,w'\in\{1,\dots,p,*\}^n\) differ only by a swap of characters in positions \(i\) and \(i+1\), and w.l.o.g.\ \(w_i=w'_{i+1}=1\). But now consider \(w''\) of length \(n-1\) obtained from \(w\) by deleting the \(i\)th character, or equivalently, from \(w'\) by deleting the \((i+1)\)th character. Notice that for \(S=[n]\setminus\{i\}\) and \(S'=[n]\setminus\{i+1\}\), we have \(w=\iota_{w^S}(w'')\), and \(w'=\iota_{w^{S'}}(w'')\). So in particular,
    we find that \(\iota_w=\iota_{w^S}\circ \iota_{w''}\), and correspondingly \(\iota_{w'}=\iota_{w^{S'}}\circ \iota_{w''}\).
    
    Now \(\sim_w\) is exactly \((\sim_{w^S})_{w''}\), and correspondingly \(\sim_{w'}\) is \((\sim_{w^{S'}})_{w''}\). But by the choice of our copy of \(C^n\), we know that \(\sim_{w^S}=\sim_{w^{S'}}\). Hence we find that \(\sim_w=\sim_{w'}\), as desired. Thus \(\sim\) is \(\{1\}\)-swappable on this copy of \(C^n\), and so must also be \([r]\)-swappable, as desired.
\end{proof}

\begin{lemma}\label{lem:canchange}
    For any \(n\), any \(r \in \{2,\dots,p\}\), and any equivalence relation \(\sim\) on \(C^{pn}\) that is \([r-1]\)-swappable, \(\sim\) is \([r]\)-interchangeable on the standard emulated copy of \(\sqrt{p}C^n\).
\end{lemma}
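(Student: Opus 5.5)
The plan is to work directly with the pullback \(\sim'\) of \(\sim\) to \([p]^n\) along \(\sigma=\sigma_n\), so that \(a\sim' b\) iff \(\sigma(a)\sim\sigma(b)\). We must show that for any word \(w\in\{1,\dots,p,*\}^n\), changing one entry \(w_i=a\in[r]\) to some \(b\in[r]\) does not change \(\sim'_w\). Since equality of induced relations is transitive, it suffices to treat the case \(b=a+1\) with \(a\in[r-1]\): any two members of \([r]\) are joined by a chain of such consecutive steps.

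First I will record how \(\sim'_w\) depends on \(\sim_{\sigma(w)}\). As noted in the paper, the image of \(\sigma\circ\iota_w\) lies in the image of \(\iota_{\sigma(w)}\). Concretely, \(\sim'_w\) is the pullback of \(\sim_{\sigma(w)}\) along the map \([p]^m\to[p]^{pm}\) that sends \((c_1,\dots,c_m)\) to \((c_1,c_1+1,\dots,c_1+(p-1),\dots,c_m,\dots,c_m+(p-1))\). This map depends only on \(m\), not on the fixed entries of \(w\). So if \(w,w'\) have the same dimension and their \(*\)-blocks sit in the same places in \(\sigma(w)\) and \(\sigma(w')\), then \(\sim_{\sigma(w)}=\sim_{\sigma(w')}\) implies \(\sim'_w=\sim'_{w'}\). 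This holds in our situation, because changing a fixed entry of \(w\) changes only the fixed block in positions \((i-1)p+1,\dots,ip\) of \(\sigma(w)\).

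The key step is that the block encoding \(a\) is \((a,a+1,\dots,a+p-1)\), while the block encoding \(a+1\) is \((a+1,\dots,a+p-1,a)\). This is the cyclic rotation obtained by moving the single entry \(a\) from the front of the block to the back. I will realise this move as a sequence of \(p-1\) adjacent transpositions in the word \(\sigma(w)\) of length \(pn\), each swapping the entry \(a\) with its right neighbour inside the block.

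Every such transposition swaps two adjacent entries of a word, at least one of which (namely \(a\)) lies in \([r-1]\). Hence, by the \([r-1]\)-swappability of \(\sim\), each transposition leaves the induced relation unchanged. The other entry is a fixed value, never \(*\), although swappability would cover that case anyway. The \(*\)-blocks are never moved, so after the \(p-1\) swaps we reach exactly \(\sigma(w')\), where \(w'\) is \(w\) with \(a\) replaced by \(a+1\). This gives \(\sim_{\sigma(w)}=\sim_{\sigma(w')}\), and so \(\sim'_w=\sim'_{w'}\) by the previous paragraph.

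The main point needing care is the bookkeeping in the previous step. One must check that the intermediate words are legitimate words in \(\{1,\dots,p,*\}^{pn}\), which they are since they are merely permutations of a block. One must also check that swappability is a statement about all words of length \(pn\), not just those in the image of \(\sigma\), so it applies to these intermediate words. Finally, the hypothesis \(r\geq 2\) is exactly what guarantees the moved entry \(a\le r-1\) lies in \([r-1]\). Chaining these steps over \(a=1,\dots,r-1\) then gives \([r]\)-interchangeability of \(\sim'\).
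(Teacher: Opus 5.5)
Your proof is correct and takes essentially the same route as the paper. Both note that the \(\sigma\)-blocks encoding different values of \([r]\) are cyclic rotations of one another, realise that rotation by adjacent transpositions each involving an entry of \([r-1]\), and then pass from \(\sim_{\sigma(w)}=\sim_{\sigma(w')}\) to \(\sim'_w=\sim'_{w'}\). The only difference is that you reduce to unit steps \(a\to a+1\), moving the single entry \(a\) to the back of its block, whereas the paper moves the whole run \(w'_i,\dots,w_i-1\) to the front at once; your version keeps the indices somewhat cleaner.
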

\begin{proof}
    Let \(w,w'\in\{1,\dots,p,*\}^n\) differ only in position \(i\), such that both \(w_i,w'_i\in [r]\). Let \(\sim'\) be the equivalence relation \(\sim\) induces on the standard emulated copy of \(\sqrt{p}C^n\). It will suffice to show that \(\sim'_w=\sim'_{w'}\).

    Since these depend only on \(\sim_{\sigma(w)}\) and \(\sim_{\sigma(w')}\) respectively, it will be enough to show that these two equivalence relations are the same. Indeed, because \(\sim\) is \([r-1]\)-swappable, we need only show that \(\sigma(w)\) can be made into \(\sigma(w')\) by a sequence of the relevant changes and swaps permitted by these conditions.

    \(\sigma(w)\) and \(\sigma(w')\) differ only in positions \((i-1)p+1,\dots,ip\), where \(\sigma(w)\) has entries \(w_i,w_i+1,\dots,p,1,\dots,w_i-1\), and \(\sigma(w')\) has corresponding entries \(w'_i,w'_i+1,\dots,p,1,\dots,w'_i-1\). Suppose without loss of generality that \(w'_i<w_i\). But now \(w'_i\leq r\), so certainly \(w_i,\dots,w'_i-1\) are all in \([r-1]\). Hence we can swap any of these with any other character in \([p]\) without changing the corresponding equivalence relation (initially \(\sim_{\sigma(w)}\)). A sequence of adjacent transpositions lets us move \(w'_i-1\) to the front of this sequence of entries, and then \(w'_i-2\), and so on, until we have moved \(w_i\) to the front of this sequence. But then we will have formed \(\sigma(w')\)---and so by our assumption of \([r-1]\)-swappability, we find that \(\sim'\) is \([r]\)-interchangeable, as desired.
\end{proof}

We now construct a \(\sim\)-invariant \(C^n\).
\begin{lemma}\label{lem:invariant}
    For any \(n\), there is some \(n'\) such that given any equivalence relation \(\sim\) on \(C^{n'}\), there is some scaled copy of \(C^n\), at scaling \(p^{(p-1)/2}\), within \(C^{n'}\) which is \(\sim\)-invariant.
\end{lemma}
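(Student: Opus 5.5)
We build the copy by an induction on \(r\) from \(1\) to \(p\), alternating Lemma~\ref{lem:canswap} and Lemma~\ref{lem:canchange}. The inductive claim is the following. For each \(r\in[p]\) and each \(n\) there is some \(N_r(n)\) such that any equivalence relation \(\sim\) on \(C^{N_r(n)}\) admits a scaled copy of \(C^n\), at scaling \(p^{(r-1)/2}\), on which \(\sim\) is both \([r]\)-interchangeable and \([r]\)-swappable. At \(r=p\) this gives the lemma, since \([p]\)-interchangeable together with \([p]\)-swappable implies \(\sim\)-invariant, as already observed.

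For the base case \(r=1\), every relation is \(\{1\}\)-interchangeable. So Lemma~\ref{lem:canswap} with \(r=1\) directly gives \(N_1(n)\) and an unscaled copy of \(C^n\) (the image of \(\iota_{w^X}\)) that is \([1]\)-interchangeable and \([1]\)-swappable.

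For the step from \(r-1\) to \(r\), fix \(n\). Let \(m\) be the value \(n'\) given by Lemma~\ref{lem:canswap} for this \(n\) and this \(r\). Set \(N_r(n)=N_{r-1}(pm)\).
\begin{itemize}
\item By induction, we find inside \(C^{N_r(n)}\) a copy \(D\) of \(p^{(r-2)/2}C^{pm}\) on which \(\sim\) is \([r-1]\)-swappable.
\item Pulling \(\sim\) back to \(C^{pm}\) via this copy, Lemma~\ref{lem:canchange} shows that the pullback is \([r]\)-interchangeable on the standard emulated copy \(\sigma_m\), which is a copy of \(\sqrt p\,C^m\). Composing gives a copy of \(p^{(r-1)/2}C^m\) in \(C^{N_r(n)}\).
\item On this copy, Lemma~\ref{lem:canswap} yields a copy of \(C^n\), still at scaling \(p^{(r-1)/2}\), that is \([r]\)-interchangeable and \([r]\)-swappable. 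This copy is the image of some \(\iota_{w^X}\), a coordinate subcube, so no further scaling is introduced.
\end{itemize}

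The main point needing care is that all these properties transfer correctly through the compositions of embeddings. Specifically, we must check that the notions ``\(S\)-interchangeable'' and ``\(S\)-swappable'' are defined intrinsically for a relation on \([p]^k\). Each time we pass to a sub-copy, we replace \(\sim\) by its pullback along an embedding of the form \(\iota\) or \(\sigma\), and apply the lemmas to that pullback as a relation on \([p]^k\).

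Lemma~\ref{lem:canswap} also preserves \([r]\)-interchangeability when restricting to \(\iota_{w^X}\). This holds because \(\sim_{w}\) on the subcube equals \(\sim_{\iota_{w^X}(w)}\), and substituting entries of \(w\) commutes with \(\iota_{w^X}\).

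Finally, we track the scaling. A copy of \(C\) inside a copy of \(\lambda C^k\) is a copy of \(\lambda C\), and composing the embeddings multiplies scalings. Only the \(p-1\) applications of \(\sigma\) contribute a factor, each of \(\sqrt p\), so the final scaling is \(p^{(p-1)/2}\), as claimed.
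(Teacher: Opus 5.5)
Your proposal is correct and is essentially the paper's proof. It runs an induction on \(r\) from \(1\) to \(p\), with Lemma~\ref{lem:canswap} at \(r=1\) as the base case, and each step applies Lemma~\ref{lem:canchange} on the standard emulated copy followed by Lemma~\ref{lem:canswap}, gaining a factor \(\sqrt{p}\) in scaling. Your explicit choice \(N_r(n)=N_{r-1}(pm)\) and your remarks on pulling relations back along the embeddings simply spell out bookkeeping that the paper leaves implicit.
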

\begin{proof}
    Recall that to show \(C^n\) is \(\sim\)-invariant, it suffices to show that \(\sim\) is \([p]\)-interchangeable and \([p]\)-swappable on \(C^n\). Combining Lemmas \ref{lem:canswap} and \ref{lem:canchange}, we note that for any \(r\in\{2,\dots,p\}\), there is some \(n'\) such that, given any \([r-1]\)-swappable equivalence relation \(\sim\) on \(C^{n'}\), there is some scaled copy of \(C^n\) at scaling \(\sqrt{p}\) within \(C^{n'}\) on which \(\sim\) is both \([r]\)-interchangeable and \([r]\)-swappable.
    
    Also note that Lemma \ref{lem:canswap} alone implies that there is some \(n'\) such that, given any equivalence relation \(\sim\) at all on \(C^{n'}\), there is some copy of \(C^n\) within \(C^{n'}\) on which \(\sim\) is \(\{1\}\)-swappable. This serves as a base case to induct, using the above observation, on the claim that for any \(r\in [p]\), there is some \(n'\) such that, given any equivalence relation \(\sim\) at all on \(C^{n'}\), there is some scaled copy of \(C^n\) at scaling \(p^{(r-1)/2}\) within \(C^{n'}\) on which \(\sim\) is both \([r]\)-interchangeable and \([r]\)-swappable. But now for \(r=p\), this gives the desired conditions for this scaled copy of \(C^n\) to be \(\sim\)-invariant, as desired.
\end{proof}

\section{Finding a rainbow or monochromatic \(C^k\)}
Throughout this section, we work within a \(\sim\)-invariant \(C^n\), so that \(\sim_w\) depends only on the dimension of \(w\). We therefore write \(\sim_m\) for the relation \(\sim_w\) induced by words \(w\) of dimension \(m\). Indeed, where there is no possible confusion, we suppress the subscript and simply write \(\sim\), since \(m\) can be deduced from the length of the words either side of the symbol. So for example, \(ab\sim ba\) will be shorthand for \(ab\sim_2 ba\). Indeed, we will be primarily concerned with expressions of the form \(ab\sim ba\), and now prove two lemmas about them.

\begin{lemma}\label{lem:eqcomm}
    Let \(C^n\) be \(\sim\)-invariant, and, for some \(n'<n\), suppose that \(a,a'\in[p]^{n'}\) are such that \(a\sim a'\). Then for each \(i\in[n']\), we have \(a_ia'_i\sim a'_ia_i\).
\end{lemma}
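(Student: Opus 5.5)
The plan is to pass from dimension \(n'\) up to dimension \(n'+1\), which is allowed because \(n'<n\) and so \(n'+1\leq n\). In dimension \(n'+1\) I will insert one extra coordinate next to position \(i\), build a chain of two equivalences that swaps the entries \(a_i\) and \(a'_i\) at positions \(i\) and \(i+1\) while leaving the other coordinates unchanged, and then restrict to those two positions. Throughout I use \(\sim\)-invariance in the form: for any word \(w\) of dimension \(m\), \(\sim_w=\sim_m\). In particular, fixing any one coordinate of a dimension-\((n'+1)\) word to a constant yields a word of dimension \(n'\), whose induced relation is \(\sim_{n'}\).

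Write \(x=a_i\) and \(y=a'_i\). Let \(c=(a_1,\dots,a_{i-1})\) and \(d=(a_{i+1},\dots,a_{n'})\), and similarly let \(c',d'\) be the corresponding parts of \(a'\). Then \(a=(c,x,d)\) and \(a'=(c',y,d')\).

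\textbf{First step.} Take the word of length \(n'+1\) with the constant \(y\) at position \(i+1\) and \(*\) everywhere else. Its dimension is \(n'\), so its induced relation is \(\sim_{n'}\). Applying it to \(a\sim a'\) gives
\[(c,x,y,d)\sim(c',y,y,d').\]

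\textbf{Second step.} Take the word with the constant \(y\) at position \(i\) and \(*\) everywhere else. Applying it to \(a'\sim a\) gives
\[(c',y,y,d')\sim(c,y,x,d).\]
The point is that in \((c',y,y,d')\) the two \(y\)'s can be read either as "\(a'\) with an extra \(y\) after position \(i\)" or as "\(a'\) with an extra \(y\) before position \(i\)". This double reading is what lets the chain pass between the two slices.

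\textbf{Conclusion.} By transitivity, \((c,x,y,d)\sim(c,y,x,d)\) in \(\sim_{n'+1}\). Now take the word in which positions \(i\) and \(i+1\) are \(*\) and all other positions carry the fixed constants from \(c\) and \(d\). This word has dimension \(2\), so by invariance its induced relation is \(\sim_2\). Hence \(xy\sim_2 yx\), that is, \(a_ia'_i\sim a'_ia_i\).

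The main obstacle is bookkeeping rather than any real difficulty. One must check that both one-coordinate-fixing words really embed the intended points \(a\) and \(a'\) in the order claimed, and that the relation on the left of each equivalence is indeed \(\sim_{n'}\) coming from a word in \(C^n\). This needs the extra coordinate to fit, which is exactly the hypothesis \(n'<n\). The degenerate case \(x=y\) is trivial, and the edge cases \(i=1\) or \(i=n'\) (where \(c\) or \(d\) is empty) go through unchanged.
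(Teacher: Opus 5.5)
Your proof is correct and is essentially the paper's argument. The paper duplicates \(a_i\) next to position \(i\), pivots through \(a_1\dots a_{i-1}a_ia_ia_{i+1}\dots a_{n'}\), and restricts to a two-dimensional slice in the context of \(a'\). You instead duplicate \(a'_i\), pivot through \((c',y,y,d')\), and restrict in the context of \(a\), which is the same argument with the roles of \(a\) and \(a'\) interchanged.
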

\begin{proof}
    Let \(w\) be the word of length \(n'+1\) that has \(w_i=a_i\), and all other entries equal to \(*\). Similarly, let \(w'\) be the word of length \(n'+1\) that has \(w_{i+1}=a_i\), and all other entries equal to \(*\). We now define \(b,b',b''\in[p]^{n'+1}\) as follows:
    \begin{align*}
        b&=a_1 a_2\dots a_{i-1} a_i a_i a_{i+1} a_{i+2} \dots a_{n'},\\
        b'&=a'_1 a'_2\dots a'_{i-1} a_i a'_i a'_{i+1} a'_{i+2} \dots a'_{n'},\text{ and}\\
        b''&=a'_1 a'_2\dots a'_{i-1} a'_i a_i a'_{i+1} a'_{i+2} \dots a'_{n'}.
    \end{align*}
    In particular, notice that \(b=\iota_w(a)\) and \(b'=\iota_w(a')\), so as \(a\sim a'\), we must have \(b\sim b'\). Likewise, as \(b=\iota_{w'}(a)\) and \(b''=\iota_{w'}(a')\), we must have \(b\sim b''\). Hence \(b'\sim b''\), by transitivity. But now let \(w''\) be the word of length \(n'+1\) defined as
    \[w''=a'_1 a'_2\dots a'_{i-1} * *\ a'_{i+1} a'_{i+2} \dots a'_{n'},\]
    and notice that \(b'=\iota_{w''}(a_i a'_i)\), and \(b''=\iota_{w''} (a'_i a_i)\). Hence as \(b'\sim b''\), we must have \(a_ia'_i\sim a'_ia_i\), as desired.
\end{proof}

\begin{lemma}\label{lem:transcomm}
    Let \(C^n\) be \(\sim\)-invariant, with \(n\geq 4\). Suppose that for some \(a,b,c\in [p]\), we have \(ab\sim ba\) and \(bc \sim cb\). Then \(ac \sim ca\).
\end{lemma}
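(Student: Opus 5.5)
The plan is to pass from the two-dimensional relations to a three-dimensional chain using \(\sim\)-invariance, and then read off \(ac\sim ca\) by restricting along a suitable word. The key fact is that, since \(C^n\) is \(\sim\)-invariant, \(\sim_2\) is the pullback of \(\sim_3\) along \emph{every} word \(w\in\{1,\dots,p,*\}^3\) of dimension \(2\). (Here \(\sim_3\) is itself the pullback of \(\sim\) along any dimension-\(3\) word of length \(n\), so \(n\geq 3\) suffices, and \(n\geq 4\) is more than enough.) Thus \(xy\sim_2 yx\) holds if and only if \(\iota_w(xy)\sim_3\iota_w(yx)\) for every such \(w\). This lets us place the two letters being swapped in any two of the three positions, with any fixed value in the third.

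Concretely, I will build a chain in \([p]^3\) from \(abc\) to \(cba\), using at each step only one of the hypotheses \(ab\sim ba\) or \(bc\sim cb\):
\begin{align*}
abc &= \iota_{**c}(ab) \sim \iota_{**c}(ba) = bac && \text{(using } ab\sim ba\text{)},\\
bac &= \iota_{*a*}(bc) \sim \iota_{*a*}(cb) = cab && \text{(using } bc\sim cb\text{)},\\
cab &= \iota_{c**}(ab) \sim \iota_{c**}(ba) = cba && \text{(using } ab\sim ba\text{)}.
\end{align*}
By transitivity of \(\sim_3\) this gives \(abc\sim cba\).

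Finally, observe that \(abc=\iota_{*b*}(ac)\) and \(cba=\iota_{*b*}(ca)\). Since \(\sim_2\) equals the pullback of \(\sim_3\) along the word \(*b*\), the relation \(abc\sim cba\) gives \(ac\sim ca\). Degenerate cases, where some of \(a,b,c\) coincide, need no separate treatment: the chain is still valid, and if \(a=c\) the conclusion is trivial anyway.

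The only real subtlety is bookkeeping, namely checking that each step uses a dimension-\(2\) word whose two starred positions carry exactly the pair being swapped. In particular, in the middle step the starred positions are non-adjacent, which is allowed because invariance covers all words of a given dimension and not only adjacent placements. No step beyond this seems to present a genuine obstacle.
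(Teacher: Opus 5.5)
Your proof is correct, and it takes a slightly different route from the paper's.

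\textbf{The paper's route.} It builds only the two-step chain \(bac\sim abc\sim acb\), via the words \(**c\) and \(a**\). The endpoints \(bac\) and \(acb\) differ in every coordinate, so nothing can be read off by a direct pullback. The paper therefore applies Lemma~\ref{lem:eqcomm} to the middle coordinate. That lemma works in dimension \(4\), inserting a duplicated letter, to extract \(ac\sim ca\). This is exactly where the hypothesis \(n\geq 4\) comes from.

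\textbf{Your route.} You add one more step to the chain, \(abc\sim bac\sim cab\sim cba\), using the non-adjacent placement \(*a*\) in the middle step. Now the endpoints agree in the middle coordinate and differ exactly by swapping \(a\) and \(c\). Pulling back along \(*b*\) then gives \(ac\sim ca\) directly. Your justification that \(\sim_2\) is the pullback of \(\sim_3\) along every length-\(3\) word of dimension \(2\) is sound. It is the composition rule \(\iota_W\circ\iota_{w'}=\iota_{\iota_W(w')}\) applied to a length-\(n\) word \(W\) of dimension \(3\).

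\textbf{Comparison.} Your argument is self-contained, does not need Lemma~\ref{lem:eqcomm}, and requires only \(n\geq 3\). The paper's version is shorter on the page because Lemma~\ref{lem:eqcomm} is needed anyway in the proof of Theorem~\ref{thm:mainplus}, so reusing it there costs nothing.
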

\begin{proof}
    Now \(abc=\iota_{**c}(ab)\) and \(bac=\iota_{**c}(ba)\), so certainly \(abc\sim bac\). Likewise, since \(abc=\iota_{a**}(bc)\) and \(acb=\iota_{a**}(cb)\), we have \(abc\sim acb\). But now transitivity gives \(bac \sim acb\)---so since \(bac\) has second entry \(a\), and \(acb\) has second entry \(c\), Lemma \ref{lem:eqcomm} gives the desired result.
\end{proof}

We are now in a position to prove Theorem \ref{thm:mainplus}.
\begin{proof}[Proof of Theorem \ref{thm:mainplus}]
    By Lemma \ref{lem:invariant}, we may choose \(n\) such that given any equivalence relation \(\sim\) on \(p^{-p/2}C^n\), there is a \(\sim\)-invariant copy of \(p^{-1/2}C^{pk+1}\) within \(p^{-p/2}C^n\). Therefore, given any colouring of \(p^{-p/2}C^n\), we may consider the equivalence relation \(\sim\) that corresponds to that colouring and pass immediately to a \(\sim\)-invariant copy of \(p^{-1/2}C^{pk+1}\).

    We consider the standard emulated copy of \(C^k\) within \(p^{-1/2}C^{pk+1}\), embedded by the map \(\sigma_{k,pk+1}\). Recall that this maps \((a_1,\dots,a_k)\) to
\[\left(a_1,a_1+1,\dots,a_1+(p-1),a_2,\dots,a_k,a_k+1,\dots,a_k+(p-1),1\right).\]
    Suppose that our colouring of our initial set contained no rainbow copy of \(C^k\). Then certainly there are two distinct points \(a,a'\) in this copy of \(C^k\) which receive the same colour---so \(\sigma_{k,pk+1}(a)\sim \sigma_{k,pk+1}(a')\). Recall that \(\sigma_k(a)\) is just \(\sigma_{k,pk+1}(a)\) with the last coordinate deleted---so in fact we have \(\sigma_{k}(a)\sim_{pk} \sigma_{k}(a')\), and these have the right length to apply Lemma \ref{lem:eqcomm}.
    
    Since \(a\neq a'\), there is some \(i\) such that \(a_i\neq a'_i\). Then for each \(j\in [p]\), we have \(\sigma(a)_{(i-1)p + j}=a_i + j-1\), and \(\sigma(a')_{(i-1)p + j}=a'_i + j-1\). So by Lemma \ref{lem:eqcomm}, we have
    \[(a_i+j-1)(a'_i+j-1)\sim (a'_i+j-1)(a_i+j-1)\]
    within our copy of \(p^{-1/2}C^{pk+1}\).

    But \(j\) was arbitrary, so in fact writing \(d=a'_i-a_i\neq 0\), we have
    \[i(i+d)\sim (i+d)i\]
    for any \(i\), where again we work modulo \(p\). But now recall that by Lemma \ref{lem:transcomm}, whenever we have \(i(i+(\ell-1)d) \sim (i+(\ell-1)d)i\) and \((i+(\ell-1)d)(i+\ell d)\sim (i+\ell d)(i+(\ell-1)d)\), we may also deduce that \(i(i+\ell d) \sim (i+\ell d)i\). Thus inductively, we have \(i(i+\ell d)\sim (i+\ell d)i\) for all \(\ell\). But recall that \(p\) is prime and \(d\) is nonzero modulo \(p\). Thus we may write any element \(j\) of \(\{1,\dots,p-1\}\) in the form \(\ell d\) (modulo \(p\)). Thus \(i(i+j)\sim (i+j)i\)---so in fact, for any \(a,b\), we have \(ab\sim ba\).

    But this implies that applying any adjacent transposition to the coordinates of a point in \([p]^{pk+1}\) does not change its colour---so as these generate \(S_{pk+1}\), any permutation of the coordinates of a point in \([p]^{pk+1}\) does not change its colour. But all the points of the standard emulated copy of \(C^k\) given by \(\sigma\) have exactly \(k+1\) entries equal to \(1\), and \(k\) entries equal to each element of \(\{2,\dots,p\}\). Thus these are certainly all permutations of each other---so they all receive the same colour. Thus this is a monochromatic copy of \(C^k\), as desired.
\end{proof}

\section{Concluding remarks}
One might have hoped, by analogy with traditional Euclidean Ramsey theory, that a proof that the regular pentagon is canonically Ramsey would give rise to a proof that every subsoluble set is canonically Ramsey, which would extend the class of known canonically Ramsey sets to cover almost every known Ramsey set \cite{behague2025nearlyknowneuclideanramsey}. It is therefore perhaps surprising that this proof method not only fails to prove an analogous result, but even fails to show that every regular polygon is canonically Ramsey.

For composite \(r\), it is only at the last step that this proof fails for the regular \(r\)-gon, so one might ask whether it can be fixed by a different argument. In fact, this obstruction seems to be more fundamental: for composite \(r\geq 6\), where \(C\) is a regular \(r\)-gon, it is possible to construct a colouring of any \(C^n\) which contains no monochromatic or rainbow scaled copies of \(C\). For example, in the case of the hexagon, we may colour \(C^n\) with the following rule: two points of \(C^n\) have the same colour if and only if, in each coordinate, their respective entries are either the same point of \(C\) or opposite points of \(C\). Then any scaled copy of \(C\) will receive exactly three colours: each point will receive the same colour as the point opposite it, and no other. There are similar constructions for all composite \(r\).

In a sense, this colouring of \(C^n\) is `canonical' in the following sense: every scaled copy of some \(C^k\) contained within \(C^n\) receives a colouring of the same form. We note that our proof can be modified to show that within sufficiently large \(C^n\), one can find scaled copies of \(C^k\) which receive a colouring of this form---that is, two points receive the same colour if and only if, identifying \(C^k\) with \([r]^k\), the difference of their respective coordinates in each entry is a multiple of some fixed \(d|r\).

Thus, for any composite \(r\), any configuration witnessing that the regular \(r\)-gon is canonically Ramsey cannot be of the form \(C^n\). In light of this discussion, we ask the following question:
\begin{question}
    Is the regular hexagon canonically Ramsey?
\end{question}

\section{Acknowledgement}
The author is funded by an Internal Graduate Studentship of Trinity College, Cambridge.

	\bibliographystyle{plain}
\bibliography{main}

\end{document}